\newtheorem{theorem}{Theorem}[]
\newtheorem{lemma}[theorem]{Lemma}
\theoremstyle{definition}
\newtheorem{problem}{Problem}
\newtheorem{remark}[theorem]{Remark}
\begin{document}
\title{The strip entropy approximation of Markov shifts on trees}

\author[Jung-Chao Ban]{Jung-Chao Ban}
\address[Jung-Chao Ban]{Department of Mathematical Sciences, National Chengchi University, Taipei 11605, Taiwan, ROC.}
\address{Math. Division, National Center for Theoretical Science, National Taiwan University, Taipei 10617, Taiwan. ROC.}
\email{jcban@nccu.edu.tw}

\author[Guan-Yu Lai]{Guan-Yu Lai}
\address[Guan-Yu Lai]{Department of Mathematical Sciences, National Chengchi University, Taipei 11605, Taiwan, ROC.}
\email{gylai@nccu.edu.tw}

\author[Cheng-Yu Tsai]{Cheng-Yu Tsai}
\address[Cheng-Yu Tsai]{Department of Mathematical Sciences, National Chengchi University, Taipei 11605, Taiwan, ROC.}
\email{badbearss48@gmail.com}
\subjclass[2020]{Primary 37B40, 37B51,	37B10}
\keywords{Entropy, strip entropy approximation, tree-shifts}

\thanks{Ban is partially supported by the National Science and Technology Council, ROC (Contract NSTC 111-2115-M-004-005-MY3). Lai is partially supported by the National Science and Technology Council, ROC (Contract NSTC 111-2811-M-004-002-MY2).}
\date{}

\baselineskip=1.2\baselineskip
\maketitle
\begin{abstract}
The strip entropy is studied in this article. We prove that the strip entropy approximation is valid for every ray of a golden-mean tree. This result extends the previous result of [Petersen-Salama, Discrete \& Continuous Dynamical Systems, 2020] on the conventional 2-tree. Lastly, we prove that the strip entropy approximation is valid for eventually periodic rays of a class of Markov-Cayley trees.
\end{abstract}


\section{Introduction}

Let $\mathcal{A}$ be a symbol set with $\left\vert \mathcal{A}\right\vert
=k\in \mathbb{N}$, for any nearest neighbor subshift of finite type (SFT) $X$ on 
$\mathbb{N}^{2}$, i.e., $X\subseteq \mathcal{A}^{\mathbb{N}^{2}}$, one can
define $H_{n}(X)$ to be the set of configurations on $\mathbb{N\times }%
\{1,\ldots ,n\}$ which contain no forbidden patterns of $X$. Such $H_{n}(X)$
can be seen as an $1$-step\footnote{%
That is, the length of the forbidden patterns of $H_{n}(X)$ is $2$.} $%
\mathbb{N}$-SFT with the alphabets in $\mathcal{A}^{1\times n}$, say $\mathcal{A}^{[n]}$. Two letters $a^{[n]}=%
\begin{array}{c}
a_{n} \\ 
\vdots  \\ 
a_{1}%
\end{array}%
$ and $b^{[n]}=%
\begin{array}{c}
b_{n} \\ 
\vdots  \\ 
b_{1}%
\end{array}%
\in \mathcal{A}^{[n]}$ are admissible in $H_{n}(X)$ if and only if $%
\begin{array}{c}
a_{n}b_{n} \\ 
\vdots  \\ 
a_{1}b_{1}%
\end{array}%
\in \mathcal{A}^{2\times n}$ is an admissible pattern of $X$ with size $%
2\times n$ (cf. \cite{pavlov2012approximating}). Suppose $A_{n}(X)$ is an $0$%
-$1$ matrix indexed by $\mathcal{A}^{[n]}$ which is the associated adjacency
matrix of $H_{n}(X)$. The recursive formula from $A_{n}(X)$ to $A_{n+1}(X)$
for $n\in \mathbb{N}$ is found in \cite{ban2005patterns,
pierce2008computing, markley70maximal}. 

In order to illustrate the relation between $X$ and $H_n(X)$, we need more definitions as follows. For $d\geq 1$, let $L\subseteq \mathbb{N}^d$ be a finite subset of $\mathbb{N}^d$ and let $Y$ be an $\mathbb{N}^d$ subshift. The \emph{canonical projection} of $Y\subseteq \mathcal{A}^{\mathbb{N}^d}$ into $\mathcal{A}^L$ is defined by $\mathcal{P}(L,Y)=\{ (y_i)_{i\in L}\in \mathcal{A}^L: y\in Y\}$. The \emph{topological entropy} of $Y$ is defined by
\begin{equation}\label{eq1}
   h_{top}(Y):=\lim_{n\to\infty}\frac{\log \left|\mathcal{P}(L_n,Y)\right|}{\left|L_n\right|}, 
\end{equation}
where $L_n=\{1,...,n\}^d$. The limit (\ref{eq1}) exists and is independent of the choice of F\o{}lner sequence $\{L_n\}_{n=1}^\infty$ (ref. \cite{ceccherini2010cellular}). Suppose $h^{n}(X)=h_{top}(H_n(X))$, it
is known that (cf. \cite{pavlov2012approximating, ban2005patterns,
pierce2008computing}). 
\begin{equation}
\lim_{n\rightarrow \infty }\frac{h^{n}(X)}{n}=h_{top}(X).  \label{2}
\end{equation}

Due to the fact of (\ref{2}), the recursive formula of $\left\{
A_{n}(X)\right\} _{n\geq 2}$ leads us to explore the interesting behavior of
entropy and the finer invariants of X, for example: the dynamic zeta
function \cite{ban2013zeta}. However, for hard square shift\footnote{%
Let $A=\{0,1\}$, the \emph{hard square shift} is the set of configurations in which no two adjacent nodes have identical labels $1$.} $\mathcal{H}$, the value $\frac{h^{n}(\mathcal{H})}{n}$ seems to converge at a linear rate, and Pavlov 
\cite{pavlov2012approximating} proves that $h^{n+1}(\mathcal{H})-h^{n}(%
\mathcal{H})$ converges at least exponentially.

Let $\Sigma =\mathcal{\{}f_{1},\ldots ,f_{d}\mathcal{\}}$, the conventional $%
d$\emph{-tree} is defined as the collection of all possible nodes of length 
$n$ of the symbol set $\Sigma $, i.e., $\Sigma ^{\ast }:=\cup _{n\geq
0}\Sigma ^{n}$, with $\epsilon $ being the only word of zero length and 
\[\Sigma^{n}=\{\omega =(\omega _{1},\ldots ,\omega _{n}):\omega _{i}\in \Sigma~\forall i=1,\ldots ,n\}.\] Denoted by $\left\vert \omega \right\vert $
the \emph{length} of $\omega \in \Sigma ^{\ast }$. A \emph{labeled tree }is
a function $t:\Sigma ^{\ast }\rightarrow \mathcal{A}$, and for each node $%
\omega \in \Sigma ^{\ast }$, $t_{\omega }$ is the label attached to $\omega $%
. Denoted by $\Delta _{n}=\cup _{i=0}^{n}\Sigma ^{i}$, the $n$\emph{%
-block }is a function $u:\Delta _{n}\rightarrow \mathcal{A}$, and a \emph{%
tree-shift} is a set $\mathcal{T}\subseteq \mathcal{A}^{\Sigma ^{\ast }}$ of
labeled trees which avoid all of a certain set of forbidden blocks. The tree-shifts concept, introduced by Aubrun and Beal \cite{aubrun2012tree}, is drawing considerable attention recently owing to the abundance of interesting phenomena (cf. \cite{petersen2021asymptotic, ban2017mixing, ban2017tree,
ban2021structure, petersen2020entropy}) related to shifts defined on the amenable group (cf. \cite{lemp2017shift, ceccherini2010cellular}).

For $\mathbf{A}=\left( A_{1},\ldots ,A_{d}\right) $, where $A_{i}\in
\{0,1\}^{k\times k}$ $\forall i=1,\ldots ,d$, the associated \emph{Markov
tree-shift}, say $\mathcal{T}_{\mathbf{A}}$, is defined as 
\[\mathcal{T}_{%
\mathbf{A}}=\{t\in \mathcal{A}^{\Sigma ^{\ast }}:A_{i}(t_{\omega },t_{\omega
i})=1~\forall \omega \in \Sigma ^{\ast },i\in \Sigma \}.\] A tree-shift $%
\mathcal{T}_{\mathbf{A}}$ is called a \emph{Markov hom tree-shift} if $%
A_{1}=\cdots =A_{d}=:A$, and we write $\mathcal{T}_{\mathbf{A}}=\mathcal{T}%
_{A}$ on that occasion. We call $\mathcal{T}_{\mathbf{A}}$ a \emph{Markov
anisotropic tree-shift} if $\mathcal{T}_{\mathbf{A}}$ is not a Markov hom tree-shift(cf. \cite{chandgotia2016mixing,petersen2020entropy}). For a $d$-tree $\Sigma ^{\ast }$, the associated \emph{boundary} (cf. \cite{benjamini1994markov})%
, say $\partial \Sigma ^{\ast }$ is defined as $\partial \Sigma ^{\ast
}=\{(\epsilon ,f_{i_{1}},f_{i_{1}}f_{i_{2}},\ldots ):f_{i_{j}}\in \Sigma $ $%
\forall j\geq 1\}$, i.e., the collection of \emph{rays}\footnote{%
This is, the set of infinite non-self-intersection paths emanating from $%
\epsilon $.} in $\Sigma ^{\ast }$. 

Suppose $\mathcal{T}\subseteq \mathcal{A}%
^{\Sigma ^{\ast }}$ is a tree-shift and $F\subseteq \Sigma ^{\ast }$, we
denote by $\mathcal{P}(F,\mathcal{T}):\mathcal{A}^{\Sigma ^{\ast
}}\rightarrow \mathcal{A}^{F}$ the \emph{canonical projection }of $\mathcal{%
T\subseteq A}^{\Sigma ^{\ast }}$ into $\mathcal{A}^{F}$, i.e., $\mathcal{P}%
(F,\mathcal{T})=\{(t_{\omega})_{\omega\in F}\in \mathcal{A}^{F}: t\in 
\mathcal{T}\}$. The \emph{entropy }of $\mathcal{T}$ is defined as the growth
rate of the patterns along the blocks $\Delta _{n}$ as $n\rightarrow \infty $%
. Precisely, 
$$h(\mathcal{T}):=\lim_{n\rightarrow \infty }\frac{\log \lvert\mathcal{%
P}(\Delta_n,\mathcal{T})\rvert}{\left\vert \Delta _{n}\right\vert },$$ the existence of
the limit is presented in \cite{PS-2017complexity, petersen2020entropy}.

\begin{figure} 
	\centering 
	\includegraphics[width=0.8\textwidth]{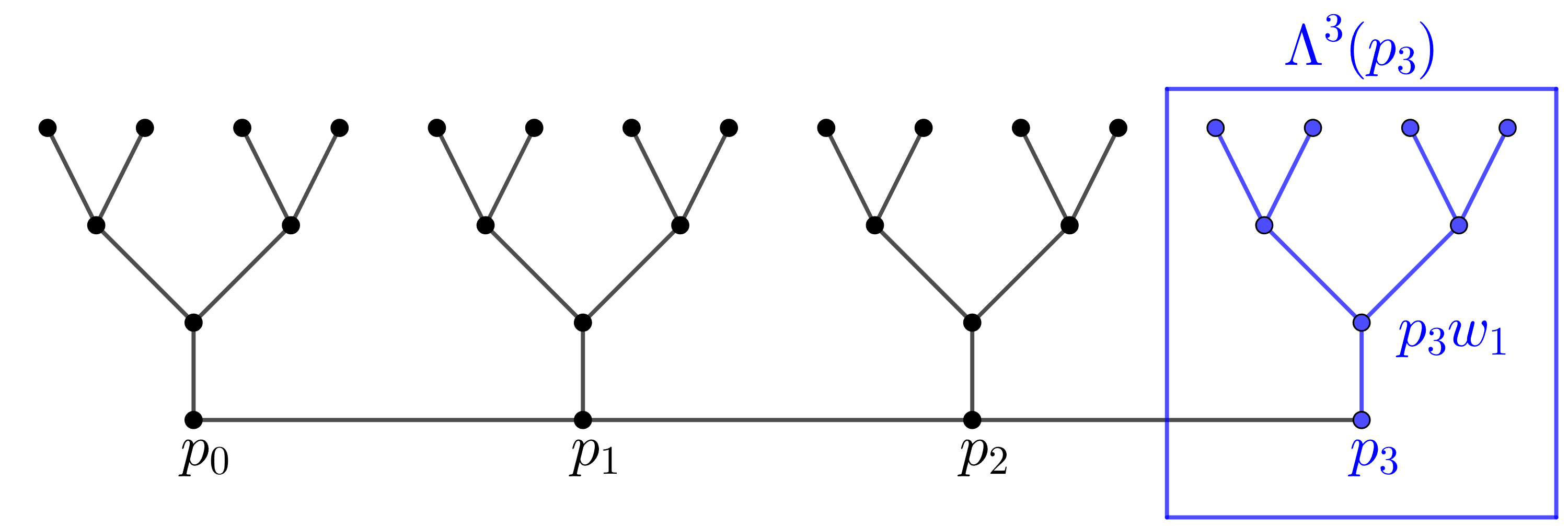} 
	\caption{The graph of ${\color{blue}\Lambda^3(p_3)}$ and $\Lambda_4^3(p)$ in the 2-tree along the path $p=(p_0,p_1,p_2,...).$ } 
	\label{2-strip tree}
\end{figure}

Let $p\in \partial \Sigma^{\ast }$, say $p=(\epsilon
,f_{i_{1}},f_{i_{1}}f_{i_{2}},\ldots)=(p_{0},p_{1},p_{2},\ldots )$, where $p_{0}=\epsilon$, and $p_{n}=f_{i_{1}}f_{i_{2}}\cdots f_{i_{n}}$. Given $m\in \mathbb{N}$ and for each $0\leq j\leq m-1$, define \[\Lambda^{n}(p_{j})=\{p_{j}\omega\in\Sigma^{\ast}:\lvert p_{j}\omega\rvert\leq j+n-1~\text{and}~\omega_{1}\neq p_{j+1}\}\] and $\Lambda_m^n(p)=\cup_{j=0}^{m-1}\Lambda^{n}(p_{j})$ (cf. Figure \ref{2-strip tree}). Finally, we define
\begin{equation}
h^{n}(\mathcal{T},p)=\limsup_{m\rightarrow
\infty }\frac{\log \lvert\mathcal{P}(\Lambda _{m}^{n}(p),\mathcal{T})\rvert}{\left\vert
\Lambda _{m}^{n}(p)\right\vert }
\end{equation}
and the authors \cite{petersen2020entropy} call $h^{n}(\mathcal{T},p)$ \emph{site specific strip entropies} \emph{along }$p$ ($n$-\emph{strip entropy} \emph{along }$p$, in brief) and the path $p$ is called the \emph{base}. Let $G=\left[ 
\begin{array}{cc}
1 & 1 \\ 
1 & 0%
\end{array}\right] $, Petersen and Salama \cite{petersen2020entropy} extend the aforementioned result to the hard square shift on a $2$-tree, i.e., a free
semigroup with two generators ($f_1$ and $f_2$). They obtain that 
\begin{equation}
\lim_{n\rightarrow \infty }h^{n}(\mathcal{T}_{G},p)=h(\mathcal{T}_{G})\text{,%
}  \label{3}
\end{equation}%
where $p=(\epsilon ,f_{1},f_{1}^{2},f_{1}^{3},\ldots )$, where $f_{1}^{r}:=%
\overbrace{f_{1}f_{1}\cdots f_{1}}^{r\text{-times}}$, that is, the leftmost
ray in a $2$-tree. The result can be seen as an analogous result to (\ref{2}) since if we take $\mathbb{N}$ as a horizontal ray in $\mathbb{N}^{2}$,
then 
\[
h^{n}(X,\mathbb{N})=\lim_{n\rightarrow \infty }\frac{\log \mathcal{P}%
(\Lambda _{m}^{n}(\mathbb{N}),X)}{\left\vert \Lambda _{m}^{n}(\mathbb{N}%
)\right\vert }=\lim_{m\rightarrow \infty }\frac{\log \Sigma _{A_{n}}^{m}}{%
n\times m}=\frac{1}{n}h^{n}(X)\rightarrow h(X)\text{.}
\]

The purpose of this article is to extend the aforementioned result to a
broad class of lattices, namely, Markov-Cayley trees. Let $M\in
\{0,1\}^{d\times d}$ and 
\[\Sigma _{M}^{n}=\{\omega =(\omega _{1},\omega
_{2},\ldots ,\omega _{n}):M(\omega _{i},\omega _{i+1})=1~\forall
i=1,\ldots ,n-1\},\] then $\Sigma _{M}^{\ast }=\cup _{n\geq 0}\Sigma _{M}^{n}$
is called a \emph{Markov-Cayley tree}. Clearly, the conventional $d$-tree is a kind of Markov-Cayley tree with $M$ being a $0$-$1$ full matrix $E$\footnote{%
That is, $E$ is a matrix with all entries being $1$'s.}, and it is
not difficult to show that (\ref{3}) holds true for any $p\in \partial
\Sigma ^{\ast }$. Since for any ray $q\in \partial \Sigma ^{\ast }$ other
than $p=(\epsilon ,f_{1},f_{1}^{2},f_{1}^{3},\ldots )$, due to the fact that if 
$\Sigma ^{\ast }=\Sigma _{E}^{\ast }$, it can be easily checked that $%
\Lambda _{m}^{n}(p)=\Lambda _{m}^{n}(q)$ for $m,n\in \mathbb{N}$. That is,
the limit of $h^{n}(\mathcal{T},p)$ is independent
of the choice of $p\in \Sigma ^{\ast }$. However, if $M$ is not a full
matrix, the value $\lim_{n\rightarrow \infty }h^{n}(\mathcal{T},p)$ (or $%
\Lambda _{m}^{n}(p)$) strongly depends on the choice of the ray $p\in
\partial \Sigma _{M}^{\ast }$. A natural question arises: Given a
Markov-Cayley tree $\Sigma _{M}^{\ast }$, which $p\in \partial \Sigma
_{M}^{\ast }$ allows the admission strip entropy approximation (\ref{3})? The problem
is extremely hard since the irregular behavior of the $\Lambda _{m}^{n}(p)$
makes it more difficult to count the number of the `$2$-dimensional'
patterns of a tree-shift along the base $p$. Nevertheless, Theorem \ref{Thm: 1} below
unveils that the above approximation (\ref{3}) is still valid for a broad
class of $\mathcal{T}_{A}$ on a golden-mean tree\footnote{%
That is, a Markov-Cayley tree with $M=G$.} with $2$ generators. Theorem \ref{Thm: 1-2} reveals that we may still use the eventually periodic ray as a base to make a strip entropy approximation. Finally, we remark that if $A=G$, then (\ref{1}) extends the result of (\ref%
{3}) to a golden-mean tree (see also Remark \ref{rmk4}).

\begin{theorem}\label{Thm: 1}
For any primitive $A\in \{0,1\}^{k\times k}$ and $p\in \partial
\Sigma _{G}^{\ast }$, we have 
\begin{equation}
\lim_{n\rightarrow \infty }h^{n}(\mathcal{T}_{A},p)=h(\mathcal{T}_{A}). \label{1}
\end{equation}
\end{theorem}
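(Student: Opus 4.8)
The plan is to reduce the count $|\mathcal P(\Lambda_m^n(p),\mathcal T_A)|$ to a one-dimensional transfer-matrix product along the ray $p$, and then to show that once $n$ is large every factor in this product carries entropy density close to $h(\mathcal T_A)$. First I would record the combinatorial shape of the strips. Writing $p=(p_0,p_1,\dots)$ with $p_j$ ending in $f_{i_j}$, the golden-mean constraint $G(i_j,i_{j+1})=1$ forbids $f_2f_2$, so each $p_j$ with $i_j=1$ (and the root $p_0$, which has children $f_1,f_2$) has a unique non-ray child, of type $f_1$ or $f_2$ according to $i_{j+1}$, whereas each $p_j$ with $i_j=2$ has only the ray child and hence $\Lambda^n(p_j)=\{p_j\}$ is trivial. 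A direct check shows the sets $\Lambda^n(p_j)$ are pairwise disjoint, so $|\Lambda_m^n(p)|=\sum_j|\Lambda^n(p_j)|$, and the only interaction between consecutive strips is the single constraint $A(t_{p_{j-1}},t_{p_j})=1$ along the ray.

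Introduce $g^{(\tau)}_n(a)$, the number of $\mathcal T_A$-labelings of the depth-$n$ subtree rooted at a node of type $\tau\in\{1,2\}$ carrying label $a$, governed by the coupled recursion $g^{(1)}_n(a)=\big(\sum_b A(a,b)g^{(1)}_{n-1}(b)\big)\big(\sum_b A(a,b)g^{(2)}_{n-1}(b)\big)$ and $g^{(2)}_n(a)=\sum_b A(a,b)g^{(1)}_{n-1}(b)$. Since $\epsilon$ behaves as a type-$1$ node, $\sum_a g^{(1)}_n(a)=|\mathcal P(\Delta_n,\mathcal T_A)|$ and $|\Delta_n|\asymp\lambda_G^{\,n}$ with $\lambda_G=(1+\sqrt5)/2=\rho(G)$, so by definition $\tfrac{1}{|\Delta_n|}\log\sum_a g^{(1)}_n(a)\to h(\mathcal T_A)$. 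With these I would write $|\mathcal P(\Lambda_m^n(p),\mathcal T_A)|=\sum_{(a_0,\dots,a_{m-1})}\prod_{j=0}^{m-2}A(a_j,a_{j+1})\prod_{j=0}^{m-1}W^{(j)}_n(a_j)$, a transfer-matrix product along $p$, where the strip weight is $W^{(j)}_n(a)=\sum_b A(a,b)g^{(\tau')}_{n-2}(b)$ for a type-$1$ node $p_j$ (with $\tau'$ the type of its non-ray child) and $W^{(j)}_n(a)=1$ for a type-$2$ node.

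Then I would prove two matching, $p$-uniform estimates. For the upper bound, factoring out $\max_a W^{(j)}_n(a)$ gives $|\mathcal P(\Lambda_m^n(p),\mathcal T_A)|\le\big(\prod_j\max_a W^{(j)}_n(a)\big)\,\mathbf 1^{\top}A^{m-1}\mathbf 1\le\big(\prod_j\max_a W^{(j)}_n(a)\big)C\rho(A)^m$; since $\max_a g^{(\tau)}_{n-2}(a)\le e^{(h+o_n(1))|\Delta_{n-2}|}$ (an easy max-versus-sum consequence of the definition of $h$), each $\log\max_a W^{(j)}_n(a)\le(h+o_n(1))|\Lambda^n(p_j)|$, and the residual $m\log\rho(A)$ is negligible because at least half of $p_0,\dots,p_{m-1}$ are type-$1$ (no two consecutive $f_2$'s), whence $|\Lambda_m^n(p)|\gtrsim m\lambda_G^{\,n-2}$. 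Dividing yields $\limsup_m\tfrac{\log|\mathcal P|}{|\Lambda_m^n(p)|}\le h(\mathcal T_A)+\varepsilon_n$ with $\varepsilon_n\to0$ independent of $p$. For the lower bound I would use a uniform-in-label estimate $g^{(\tau)}_n(a)\ge e^{(h-o_n(1))|\Delta_n|}$ valid for \emph{every} $a$, which forces $W^{(j)}_n(a)\ge e^{(h-o_n(1))|\Lambda^n(p_j)|}$ for every $a$ at type-$1$ nodes; choosing any single $A$-admissible label sequence $(a_j^\ast)$ along the ray (one exists since $A$ is primitive) gives $|\mathcal P(\Lambda_m^n(p),\mathcal T_A)|\ge\prod_{j:\,i_j=1}e^{(h-o_n(1))|\Lambda^n(p_j)|}$, and the type-$2$ overhead $\#\{j:i_j=2\}$ is negligible against $|\Lambda_m^n(p)|\gtrsim m\lambda_G^{\,n-2}$. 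This produces $h^n(\mathcal T_A,p)\ge h(\mathcal T_A)-\varepsilon_n'$ with $\varepsilon_n'\to0$; squeezing then gives (\ref{1}), in fact uniformly over $p\in\partial\Sigma_G^\ast$.

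The main obstacle is precisely the uniform-in-label lower bound $g^{(\tau)}_n(a)\ge e^{(h-o_n(1))|\Delta_n|}$, since the naive comparison of $g^{(\tau)}_n(a)$ across labels multiplies the discrepancy at every level and degrades doubly exponentially. Here primitivity of $A$ is essential. I would fix $r$ with $A^r>0$, pick a label $b^\ast$ nearly maximizing $g^{(1)}_{n-r}$, and \emph{plant} a fixed admissible path $a=c_0\to\cdots\to c_r=b^\ast$ level-by-level across the top $r$ levels of the subtree (every depth-$i$ node receives the label $c_i$, which is automatically consistent on the tree), attaching independent near-maximal labelings below each of the $\asymp\lambda_G^{\,r}$ depth-$r$ nodes. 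Because the top $r$ levels cost only a bounded factor while the $\asymp\lambda_G^{\,r}$ pendant subtrees each contribute $e^{(h-o(1))\lambda_G^{\,n-r}}$, the bound follows once one checks $\lambda_G^{\,r}\cdot\lambda_G^{\,n-r}\asymp|\Delta_n|$ and that the same planting handles $\tau=2$ via $g^{(2)}_n(a)=\sum_b A(a,b)g^{(1)}_{n-1}(b)$. Carrying out this comparison with the error $o_n(1)$ uniform in $a$ (and hence in $p$) is the technical heart of the proof.
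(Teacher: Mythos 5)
Your proposal follows essentially the same route as the paper's proof. Your transfer-matrix identity along the ray, with diagonal strip weights $W^{(j)}_n(a)$ coupled by the constraint $A(a_j,a_{j+1})$, is exactly the content of Lemma \ref{lemma 1}: the matrices $A_1=A^T\circ B_1$, $A_2=A^T\circ B_2$, $A_3=A^T$ there are precisely $A^T$ Hadamard-weighted by your strip weights, and your type-$1$/type-$2$ node dichotomy is the paper's classification $T_1,T_2,T_3$ (the paper splits your type-$1$ case according to the type $\tau'$ of the non-ray child). Both arguments then squeeze $h^n(\mathcal{T}_A,p)$ between a max-weight upper bound and a lower bound built from near-maximal weights along one admissible ray labeling.

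The one genuine difference is how the crux, the uniform-in-label comparability of rooted counts, is treated, and here your proposal is more careful than the published argument. The paper compresses this point into the choice of the constant $r$ in (\ref{CD}), $0<r<\min_j\beta_{n-2}(j)/\beta_{n-2}$, justified only by the remark that such $r$ exists ``since $A$ is primitive''; but $r$ depends on $n$, and the limit (\ref{ev}) actually requires $\log(1/r)=o(\lvert\Delta_{n-2}\rvert)$, i.e.\ exactly your uniform lower bound $\min_a\beta_m(a)\geq e^{(h-o(1))\lvert\Delta_m\rvert}$, which the paper never proves (the same issue reappears in the constant $c_n$ of (\ref{UL})). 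Your planting argument is the right tool for this, but note that as sketched it does not yet close: planting a constant label per level down to a near-maximizer $b^*$ of $g^{(1)}_{n-r}$ controls the type-$1$ subtrees hanging at depth $r$, while the type-$2$ subtrees there carry a non-vanishing fraction of $\lvert\Delta_n\rvert$ (asymptotically $1-\phi/\sqrt5$, where $\phi=(1+\sqrt5)/2$), and near-maximality of $g^{(2)}_{n-r}(b^*)=\sum_b A(b^*,b)\,g^{(1)}_{n-r-1}(b)$ needs some successor of $b^*$ to be type-$1$-good at depth $n-r-1$ --- which is the very statement being proved. The repair is a bootstrap rather than a single application: setting $D_m=\max_a\log\beta_m(a)-\min_a\log\beta_m(a)$, the planting inequality gives $D_{n+r}\leq N_2(r)\,(D_{n-1}+\log k)+\lvert\Delta_r\rvert\log k$, where $N_2(r)\leq \phi^r/\sqrt5+1$ is the number of $f_2$-type nodes at depth $r$; iterating in steps of $r+1$ yields $D_m=O(\mu^m)$ with $\mu=N_2(r)^{1/(r+1)}<\phi$, hence $D_m=o(\lvert\Delta_m\rvert)$, and the type-$2$ gap follows from $\max_a\log g^{(2)}_m(a)-\min_a\log g^{(2)}_m(a)\leq D_{m-1}+\log k$. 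With this lemma in hand, both your argument and the paper's go through; without it, both have the same hole, so you have in fact isolated (and nearly filled) a gap that the paper glosses over.
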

Let $M\in \{0,1\}^{d\times d}$ and $\Sigma _{M}^{\ast }$ be the associated
Markov-Cayley tree. For $u=(u_{1},u_{2},\ldots )\in \Sigma _{M}^{\ast }$,
and $n\in \mathbb{N}$, define $u[1,n]:=(u_{1},\ldots ,u_{n})\in \Sigma
_{M}^{n}$. For $u,v\in \Sigma _{M}^{\ast }$ we denote by $v\leq u$ if $%
\left\vert v\right\vert \leq \left\vert u\right\vert $ and $u[1,\left\vert
v\right\vert ]=v$. A finite collection $S$ of some vertices of $\Sigma
_{M}^{\ast }$ is called a \emph{complete prefix set }(CPS)\emph{\ }if for
every $u\in \Sigma _{M}^{\ast }$ with $\left\vert u\right\vert \geq \max
\{\left\vert v\right\vert :v\in S\}$, there exists $v\in S$ such that $v\leq
u$ and $v\nleq v^{\prime}$ for all $v$ and $v^{\prime }\in S$. Suppose $%
u\in \Sigma _{M}^{\ast }$, the \emph{follower set }of $u$ is defined as  
\[
F_{M}(u)=\{v\in \Sigma _{M}^{\ast }:uv\in \Sigma _{M}^{\ast }\}\text{.}
\]%
Finally, a Markov-Cayley tree $\Sigma _{M}^{\ast }$ is called \emph{complete
recursive tree }if there is a CPS $S$ of $\Sigma _{M}^{\ast }$ such that $%
F_{M}(u)=\Sigma _{M}^{\ast }$ for all $u\in S$. The concept of a complete recursive tree is introduced in \cite{ban2022Commutativity} and the authors prove that the commutativity of the entropy holds true on such a tree. Meanwhile, the characterization of a complete recursive tree is also presented therein (Theorem \ref{Thm: complete}). Finally, we extend Theorem \ref{Thm: 1} to a class of complete recursive trees on each eventually periodic path $p\in \partial \Sigma _{M}^{\ast }$, as shown below.   

\begin{theorem}\label{Thm: 1-2}
Let $A\in \{0,1\}^{k\times k}$ be a primitive and $\Sigma _{M}^{\ast }$ be a
complete recursive tree with $M$ has exactly one full row. Then (\ref{1}) holds true for every eventually
periodic ray $p\in \partial \Sigma _{M}^{\ast }$.
\end{theorem}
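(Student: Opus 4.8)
The plan is to evaluate both sides of \eqref{1} through a transfer-matrix (``caterpillar'') decomposition along the spine $p$ and then let the strip width $n\to\infty$; the complete recursive structure together with the single full row of $M$ is exactly what forces the two one-parameter families of growth rates to converge to a common value. First I would dispose of the pre-period: writing the eventually periodic ray as $p=w\,q^{\infty}$ with $q$ a cycle in the graph of $M$ of length $\rho=|q|$, the strips attached at the finitely many spine nodes inside $w$ contribute only $O(n)$ sites to $\lvert\Lambda_m^n(p)\rvert$ and an additive $O(1)$ term to $\log\lvert\mathcal{P}(\Lambda_m^n(p),\mathcal{T}_A)\rvert$, so under $\limsup_{m\to\infty}$ they are irrelevant and we may take the generator sequence $(i_j)_{j\ge 1}$ of $p$ to be purely $\rho$-periodic. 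Next I would exploit the fact that $\Lambda_m^n(p)$ is a spine $p_0,\dots,p_{m-1}$ carrying a depth-$(n-1)$ bush in every admissible off-spine direction $b\neq i_{j+1}$ at each node. Counting $\mathcal{T}_A$-patterns node by node along the spine yields
\[
\lvert\mathcal{P}(\Lambda_m^n(p),\mathcal{T}_A)\rvert=\mathbf{1}^{\mathsf T}\,B_0^{(n)}B_1^{(n)}\cdots B_{m-2}^{(n)}\,D_{m-1}^{(n)}\,\mathbf{1},
\]
where $D_j^{(n)}=\mathrm{diag}\big(w_j^{(n)}(a)\big)_a$, the weight $w_j^{(n)}(a)$ is the number of $A$-labelings of the off-spine bush at $p_j$ to depth $n-1$ given root label $a$, and $B_j^{(n)}=D_j^{(n)}A$. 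By $\rho$-periodicity of $(i_j)$ the weights $w_j^{(n)}$ are $\rho$-periodic in $j$, so the product is governed by the single period matrix $C_n:=B_0^{(n)}\cdots B_{\rho-1}^{(n)}$.

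Second, I would extract the strip entropy as a Perron eigenvalue. Since $A$ is primitive, for $n$ large the weights $w_j^{(n)}(a)$ are strictly positive and $C_n$ is a primitive nonnegative matrix whose zero pattern dominates that of $A^{\rho}$; letting $\lambda_n$ be its Perron eigenvalue, the matrix product grows like $\lambda_n^{m/\rho}$ up to bounded factors, so $\log\lvert\mathcal{P}(\Lambda_m^n(p),\mathcal{T}_A)\rvert=\tfrac{m}{\rho}\log\lambda_n+O(1)$. Likewise $\lvert\Lambda_m^n(p)\rvert=\tfrac{m}{\rho}L_n+O(1)$ with $L_n$ the number of sites (spine plus bushes) in one period, whence
\[
h^n(\mathcal{T}_A,p)=\frac{\log\lambda_n}{L_n},
\]
and the theorem reduces to the single limit $\lim_{n\to\infty}\log\lambda_n/L_n=h(\mathcal{T}_A)$.

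Third comes the structural input. Let $s$ be the unique full row of $M$. Because $F_M(u)=\Sigma_M^\ast$ forces the last symbol of every $u$ in the CPS $S$ to have a full row, every element of $S$ ends in $s$, and hence every infinite path—in particular the period $q$—must visit $s$ (a ray avoiding $s$ would have no prefix in $S$). Thus the spine passes through an $s$-node at least once per period, and there the off-spine bush consists of full depth-$(n-1)$ copies of $\Sigma_M^\ast$ in $d-1$ directions; consequently $w_j^{(n)}(a)$ is a product, over these $d-1$ directions, of the very subtree-count vectors $g^{(b)}_{n-2}$ (where $g^{(b)}_{n}(a)$ counts the $A$-labelings of a depth-$n$ subtree rooted at a $b$-node with root label $a$) that also assemble $h(\mathcal{T}_A)$ through the recursion on $\Delta_n$. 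I would then compare $\log\lambda_n/L_n$ with $\log\lvert\mathcal{P}(\Delta_n,\mathcal{T}_A)\rvert/\lvert\Delta_n\rvert$: both are per-site exponential growth rates of $A$-labelings of a widening bush, and the renewal at $s$ supplied by the complete recursive property guarantees that the spine's periodic bushes reproduce, up to lower-order boundary effects, the same self-similar subtree counts as the full tree. Letting $n\to\infty$ should then give the required equality.

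The main obstacle is precisely this last matching. On a Markov--Cayley tree the geometry of $\Lambda_m^n(p)$ is irregular, so controlling the Perron eigenvalue $\lambda_n$ of the period product $C_n$ uniformly in $n$—and showing that both $\log\lambda_n$ and the denominator $L_n$ are dominated by the same full-branching contribution at the $s$-nodes—is delicate. The coupling in $B_j^{(n)}=D_j^{(n)}A$ between the spine transitions (through $A$) and the bush counts (through $D_j^{(n)}$) must be shown not to depress the growth rate below that of the homogeneous subtree recursion; quantitatively one needs $\log\lambda_n=L_n\,h(\mathcal{T}_A)+o(L_n)$, which demands a careful interior/boundary estimate on the depth-$(n-1)$ bushes, analogous to but more intricate than the count underlying the golden-mean case of Theorem~\ref{Thm: 1}.
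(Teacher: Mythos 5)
Your setup coincides, step for step, with the paper's own proof: the periodicity of the strips $\Lambda^n(p_j)$ along an eventually periodic ray (the paper's Lemma \ref{lemma 2}), the transfer-matrix recursion along the spine (your $B_j^{(n)}=D_j^{(n)}A$ is, up to transposition, the paper's $R=A^T\circ B$ with $B$ built from the bush counts $\beta_{n-i}(\ell)$), the reduction of $h^n(\mathcal{T}_A,p)$ to $\log\lambda_n/L_n$ where $\lambda_n$ is the Perron root of the one-period product (the paper's fourth claim, with your $C_n$ playing the role of $D=D_1\cdots D_\ell$), and the observation that the unique full row forces full copies of $\Sigma_M^*$ to hang off the spine once per period. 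So far this reproduces the paper's argument.

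However, the proof is not complete: the step you defer --- showing $\lim_{n\to\infty}\log\lambda_n/L_n=h(\mathcal{T}_A)$ --- is the entire content of the theorem, and you leave it as an acknowledged obstacle with only a heuristic (``the renewal at $s$ \ldots should then give the required equality''). This is a genuine gap, and it is also where the paper's argument turns out to be simple rather than delicate. The missing idea is a sandwich that is uniform in $n$: since $A$ is primitive, there is a constant $r>0$ \emph{independent of $n$} with $\beta_m(a)\geq r\,\beta_m$ for every symbol $a$ and every $m$; hence each period transfer matrix $R_j$ satisfies $\prod_i\beta_{n-i}^{t_{i,j}}\,L_j\leq R_j\leq \prod_i\beta_{n-i}^{t_{i,j}}\,U_j$, where $L_j=A^T\circ L'$ and $U_j=A^T\circ U'$ are \emph{fixed} matrices ($L'$ with all entries $r^{d-1}$, $U'$ with all entries $k^{d-1}$). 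Multiplying these bounds around one period gives $\log\rho_D=\sum_i t_i\log\beta_{n-i}+O(1)$ with the $O(1)$ uniform in $n$, while the period site count is $L_n=\sum_i t_i\lvert\Delta_{n-i}\rvert$ plus lower-order (chain and spine) terms. Dividing, the $O(1)$ constants are annihilated because $L_n\to\infty$, and the known convergence $\log\beta_m/\lvert\Delta_m\rvert\to h(\mathcal{T}_A)$ finishes the proof: each bush contributing a factor $\beta_{n-i}$ to the Perron root contributes $\lvert\Delta_{n-i}\rvert$ sites to the denominator, which is exactly the matched accounting you identified as needed ($\log\lambda_n=L_n\,h(\mathcal{T}_A)+o(L_n)$) but did not establish. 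No interior/boundary estimate of the kind you anticipate is required; the uniform entrywise comparison with $\beta_{n-i}$ does all the work.
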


In closing, we raise two problems related to our findings.

\begin{problem}
\label{Prob: 1}

\begin{itemize}
 \item[1.] What type of $p\in \partial \Sigma _{M}^{\ast }$ yields a valid
approximation (\ref{1}) for general $M\in \{0,1\}^{d\times d}$ and $A\in
\{0,1\}^{k\times k}$?

\item[2.] What is the rate of the convergence (\ref{1})?   
\end{itemize}
\end{problem}

For the Problem \ref{Prob: 1} (2), recall that in $\mathbb{N}^{2}$ SFT, the convergence rate (\ref{1}) is linear. However, based on some numerical data, we speculate that the rate of the convergence (\ref{1}) for a Markov tree-shift is at least exponential. In the remainder of this article, we provide the complete proof for Theorem \ref{Thm: 1} (resp. Theorem \ref{Thm: 1-2}) in Section \ref{sub 2} (resp. Section \ref{sub 3}).

\section{Proof of Theorem \ref{Thm: 1}}\label{sub 2}
In this section, we give the proof of Theorem \ref{Thm: 1}. Before we prove the Theorem \ref{Thm: 1}, the following notations are needed.

Let $\mathcal{A}=\{1,...,k\}$ be the set of symbols and let $A\in \{0,1\}^{k\times k}$ be a primitive 0-1 $k\times k$ matrix. Let $\Sigma_M^*$ be the Markov tree and let $\mathcal{T}_A$ be the associated tree-shift on $\Sigma_M^*$. For $p=(p_0,p_1,p_2,...)\in\partial\Sigma^*_M$ with $p_0=\epsilon$, let $\lambda^n(p_m)=|\Lambda^n(p_m)|$ be the cardinality of $\Lambda^n(p_m)$. Let $\beta_n=|\mathcal{P}(\triangle_n,\mathcal{T}_A)|$ be the cardinality of the set that contains all labelings of $\mathcal{T}_A$ on $\triangle_n$ and let 
\[\beta_n(i)=\lvert\mathcal{P}(\triangle_n,\{t=(t_w)\in\mathcal{T}_A:t_\epsilon=i\})\rvert\] 
be the cardinality of the set that contains all labelings of $\mathcal{T}_A$ on $\triangle_n$ with the root $\epsilon$ being labeled with $i\in \mathcal{A}$. Let $\alpha_{n,m}=|\mathcal{P}(\Lambda_m^n(p),\mathcal{T}_A)|$ be the cadinality of the set that contains all labelings of $\mathcal{T}_A$ on $\Lambda_m^n(p)$ and let 
\[\alpha_{n,m}(i)=|\mathcal{P}(\Lambda_m^n(p),\{t=(t_w)\in\mathcal{T}_A:t_{p_m}=i\})|\] be the cadinality of the set that contains all labelings of $\mathcal{T}_A$ on $\Lambda_m^n(p)$ with the node $p_m$ being labeled $i\in\mathcal{A}$.

We now consider $\Sigma_M^*=\Sigma_G^*,$ i.e., a golden-mean tree, and the adjacency matrix  $A=(a_{ij})_{1\leq i,j\leq k}$ is a primitive $k\times k~0-1$ matrix. Before we prove the Theorem \ref{Thm: 1}, we classify the possible types of $\Lambda^n(p_j)$ and $\Lambda^n(p_j)\cup\Lambda^n(p_{j+1})$ as follows. For each path $p=(p_0,p_1,p_2,...)\in\partial\Sigma^*_G$, the type of $\Lambda^n(p_j)$ is a function of $(f_s,f_{s'})$ that satisfies $(p_j,p_{j+1})=(p_{j-1}f_s,p_{j-1}f_sf_{s'})$ for all $f_sf_{s'}\in \Sigma_G^*.$ Due to the constraint of the golden-mean tree, we say that $\Lambda^n(p_j)$ is of type $T_1$ if $(p_j,p_{j+1})=(p_{j-1}f_1,p_{j-1}f_1^2)$; it is of type $T_2$ if $(p_j,p_{j+1})=(p_{j-1}f_1,p_{j-1}f_1f_2)$; and it is of type $T_3$ if $(p_{j},p_{j+1})=(p_{j-1}f_2,p_{j-1}f_2f_1)$. This implies that there are exactly three types $T_1, T_2$ and $T_3$ of $\Lambda^n(p_j)$ (cf. Figure \ref{3types}). Similarly, the type of $\Lambda^n(p_{j})\cup\Lambda^n(p_{j+1})$ is a function of $(p_j,p_{j+1},p_{j+2})$. Similarly, we say that $\Lambda^n(p_{j})\cup\Lambda^n(p_{j+1})$ is of type $S_1$ if $(p_j,p_{j+1},p_{j+2})=(p_{j-1}f_1,p_{j-1}f_1^2,p_{j-1}f_1^3);$ it is of type $S_2$ if $(p_j,p_{j+1},p_{j+2})=(p_{j-1}f_1,p_{j-1}f_1^2,p_{j-1}f_1^2f_2);$ it is of type $S_3$ if $(p_j,p_{j+1},p_{j+2})=(p_{j-1}f_2,p_{j-1}f_2f_1,p_{j-1}f_2f_1^2);$ it is of type $S_4$ if $(p_j,p_{j+1},p_{j+2})=(p_{j-1}f_1,p_{j-1}f_1f_2,p_{j-1}f_1f_2f_1);$ it is of type $S_5$ if $(p_j,p_{j+1},p_{j+2})=(p_{j-1}f_2,p_{j-1}f_2f_1,p_{j-1}f_2f_1f_2).$ This implies that there are exactly five types $S_1,S_2,\cdots, S_5$ of $\Lambda^n(p_j)\cup\Lambda^n(p_{j+1})$ (cf. Figure \ref{3types}).

\begin{figure} 
	\centering 
	\includegraphics[width=1\textwidth]{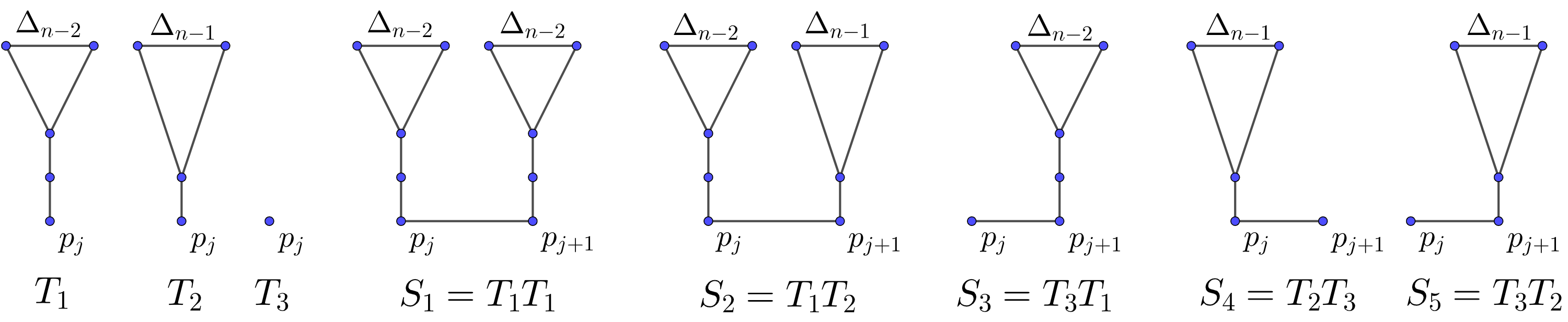} 
	\caption{The graph of three types $(T_1,T_2,T_3)$ of $\Lambda^n(p_j)$ and five types $(S_1,...,S_5)$ of $\Lambda^n(p_j)\cup\Lambda^n(p_{j+1})$ in $\mathcal{T}_A$ along $p.$} 
	\label{3types} 
\end{figure}

The following lemma establishes the recursive relation between $\alpha_{n,m+1}(1),...,\alpha_{n,m+1}(k)$ and $\alpha_{n,m}(1),...,\alpha_{n,m}(k)$ for $m\geq 0$. To shorten statements and notations, we define some properties and operations of matrices. For matrices $H_1=(h_{st}^1)_{1\leq s,t\leq k}$ and $H_2=(h_{st}^2)_{1\leq s,t\leq k}$ , we define a \emph{Hadarmard product} $H_1\circ H_2=H_3,$ where $H_3=(h_{st}^3)_{1\leq s,t\leq k}$ is a $k\times k$ matrix with $h_{st}^3=h_{st}^1h_{st}^2.$ Let $B_1=(b_{st}^1)_{1\leq s,t\leq k}$ and $B_2=(b_{st}^2)_{1\leq s,t\leq k}$ with $b_{st}^1=\sum_{j=1}^k(\sum_{\ell=1}^k a_{s\ell}a_{\ell j})\beta_{n-2}(j)$ and $b_{st}^2=\sum_{\ell=1}^ka_{s\ell}\beta_{n-1}(\ell)$. Let $A_1=A^T\circ B_1,A_2=A^T\circ B_2$ and $A_3=A^T,$ where $A^T$ is denoted by the transpose of $A.$ We remark that $A_1,A_2$ and $A_3$ are primitive, since $A$ is primitive and $B_1,B_2>0.$

\begin{lemma}\label{lemma 1}
For any $m\geq 0,$ there exists a $k\times k$ matrix $R\in\{A_1,A_2,A_3\}$ such that 
\[(\alpha_{n,m+1}(1),...,\alpha_{n,m+1}(k))^T=R(\alpha_{n,m}(1),...,\alpha_{n,m}(k))^T.\]
\end{lemma}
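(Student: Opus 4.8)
The plan is to build $\Lambda_{m+1}^n(p)$ from $\Lambda_m^n(p)$ by attaching a single fin and to read off the resulting transfer matrix. The starting point is the disjoint decomposition $\Lambda_{m+1}^n(p)=\Lambda_m^n(p)\sqcup\Lambda^n(p_m)$: every node of $\Lambda^n(p_m)$ has $p_m$ as a prefix, while every node of an earlier fin $\Lambda^n(p_j)$ with $j\le m-1$ lies off the branch continuing along $p$, so the two sets meet only through the tree-edge joining $p_{m-1}$ to $p_m$. First I would make this decomposition precise and record that $\Lambda^n(p_m)$ is a subtree rooted at $p_m$ whose sole interaction with the rest of $\Lambda_{m+1}^n(p)$ is that one edge. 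Writing $j$ for the label carried by $p_m$ and $i$ for the boundary label at $p_{m+1}$, a configuration counted by $\alpha_{n,m+1}(i)$ restricts to a configuration of $\Lambda_m^n(p)$ carrying $j$ at $p_m$ (which is exactly what $\alpha_{n,m}(j)$ counts, the edge $p_{m-1}\to p_m$ already being accounted for there) and extends independently over the new fin. Summing over the junction label $j$ yields
\[
\alpha_{n,m+1}(i)=\sum_{j=1}^{k}R_{ij}\,\alpha_{n,m}(j),\qquad R_{ij}=a_{ji}\,N_{j},
\]
where $a_{ji}$ enforces the edge $p_m\to p_{m+1}$ and $N_{j}$ counts the admissible labelings of $\Lambda^n(p_m)$ once its root $p_m$ is pinned to $j$.

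The second step computes $N_{j}$, and the golden-mean constraint $M=G$ forces exactly the three possibilities already isolated as $T_1,T_2,T_3$. The structural fact I would use is that a vertex reached by $f_1$ has two children (one reached by $f_1$, one by $f_2$) whereas a vertex reached by $f_2$ has the single child reached by $f_1$, and that $\Lambda^n(p_m)$ discards precisely the child sitting on $p$. If $\Lambda^n(p_m)$ is of type $T_3$, then $p_m$ is reached by $f_2$ and its only child lies on $p$; the fin is the single vertex $\{p_m\}$, so $N_{j}=1$ and $R=A^T=A_3$. If it is of type $T_2$, then $p_m$ is reached by $f_1$ and $p$ departs through $f_2$, so the retained branch is the $f_1$-subtree, which is isomorphic to a golden-mean block; counting over its top edge and recognizing a copy of $\triangle$ beneath gives the single sum
\[
N_{j}=\sum_{\ell=1}^{k}a_{j\ell}\,\beta_{n-1}(\ell),
\]
the defining shape of $B_2$, hence $R=A^T\circ B_2=A_2$. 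If it is of type $T_1$, then $p_m$ is reached by $f_1$ and $p$ departs through $f_1$, so the retained branch is the $f_2$-subtree; its root-child is reached by $f_2$ and therefore has a single child, and unwinding this extra level produces the double sum $N_{j}=\sum_{j'=1}^{k}(\sum_{\ell=1}^{k}a_{j\ell}a_{\ell j'})\beta_{n-2}(j')$, the defining shape of $B_1$, hence $R=A^T\circ B_1=A_1$. The case $m=0$ is degenerate: the root $\epsilon$ admits both children and so behaves like an $f_1$-vertex, whence $\Lambda^n(\epsilon)$ is of type $T_1$ or $T_2$ according as $p$ leaves $\epsilon$ through $f_1$ or $f_2$, and $T_3$ cannot occur.

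The recurring device throughout is the identification of each retained branch with a golden-mean block whose labelings, by the nearest-neighbor (Markov) structure and primitivity of $\mathcal{T}_A$, are enumerated by the already-defined $\beta_{n-1}$ and $\beta_{n-2}$ after the top one or two edges are summed out; the distinction between the $f_1$-branch (a full block, single sum) and the $f_2$-branch (a single-child block, double sum) is precisely what separates $B_2$ from $B_1$, and hence $A_2$ from $A_1$.

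The main obstacle I anticipate is the bookkeeping in this fin enumeration: correctly tracking which child of $p_m$ survives, assigning the correct ``color'' to the surviving child (and thus the correct block-count $\beta_{n-1}$ versus the one-step contraction yielding $\beta_{n-2}$), and keeping the truncation depths consistent with the definition of $\Lambda^n(p_m)$. A further point of care is that the weight $N_{j}$ is naturally recorded against the \emph{source} label $j$ (the label summed over), so that matching it to the row data of $B_1,B_2$ involves only the harmless transposition that leaves the Perron eigenvalue—and therefore the later entropy estimate—unchanged; this convention must merely be fixed once and used consistently. Once $N_{j}$ is matched to $B_1$ and $B_2$ in the two nontrivial cases, the claimed trichotomy $R\in\{A_1,A_2,A_3\}$ follows immediately, and the remark that $A_1,A_2,A_3$ are primitive is automatic from the primitivity of $A$ together with $B_1,B_2>0$.
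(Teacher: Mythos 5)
Your proof is correct and is essentially the paper's own argument: split off one fin, condition on the junction label, multiply the two counts, and reduce the golden-mean case analysis to the trichotomy singleton / $f_1$-subtree / $f_2$-subtree. Your three fin counts $1$, $\sum_{\ell}a_{j\ell}\beta_{n-1}(\ell)$, and $\sum_{j'}\bigl(\sum_{\ell}a_{j\ell}a_{\ell j'}\bigr)\beta_{n-2}(j')$ are exactly the paper's $P_2$ in its types $S_4$; $S_2,S_5$; and $S_1,S_3$ respectively, so your $T_1,T_2,T_3$ trichotomy is the paper's five $S$-types collapsed. The one substantive difference is bookkeeping, and it is worth recording because it explains the transposition you flagged: you attach the fin $\Lambda^n(p_m)$, which is what the definition $\Lambda_{m+1}^n(p)=\cup_{j=0}^{m}\Lambda^n(p_j)$ literally requires, so your weight $N_j$ rides on the column index and you get $R_{ij}=a_{ji}N_j$, i.e.\ $R=A^T\circ B_1^T$ or $A^T\circ B_2^T$ ($A^T$ times a positive diagonal matrix on the right); the paper instead conditions on the label $s$ at $p_{m+1}$ and counts the fin $\Lambda^n(p_{m+1})$, so its weight rides on the row index and it gets $R=A^T\circ B_1$ or $A^T\circ B_2$ (diagonal on the left), matching the stated $A_1,A_2$ — but that is, strictly, a recursion for the shifted quantity in which the region $\cup_{j\le m}\Lambda^n(p_j)$ carries its constraint at $p_m$, an off-by-one already present in the paper between the definitions of $\alpha_{n,m}(i)$ (constraint at $p_m$) and $\Lambda_m^n(p)$ (fins only up to $p_{m-1}$). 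Your resolution of the mismatch is the right one: writing $D$ for the positive diagonal matrix of fin counts, $A^TD$ and $DA^T$ are conjugate by $D$, hence have the same Perron root, and in the long matrix products used in the proof of Theorem \ref{Thm: 1} the two conventions differ only by a one-step shift and bounded boundary factors, so all subsequent entropy estimates are unaffected.
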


\begin{proof}
For $m\geq 0$, let $s\in\{1,...,k\}$ be the labelling of the node $p_{m+1}.$ From the definition of $\alpha_{n,m+1}(s),$ we get $\alpha_{n,m+1}(s)=P_1P_2,$ where
\[P_1=\left|\mathcal{P}\left(\Lambda_m^n(p)\cup \{p_{m+1}\},\{t=(t_w)\in\mathcal{T}:t_{p_{m+1}}=s\}\right)\right|\]
is the cardinality of the set that contains all labellings of $\mathcal{T}$ on $\Lambda_m^n(p)\cup \{p_{m+1}\}$ with $t_{p_{m+1}}=s,$ and
\[P_2=\left|\mathcal{P}\left(\Lambda^n(p_{m+1}),\{t=(t_w)\in\mathcal{T}:t_{p_{m+1}}=s\}\right)\right|\]
is the cardinality of the set that contains all labellings of $\mathcal{T}$ on $\Lambda^n(p_{m+1})$ with $t_{p_{m+1}}=s.$ Now, we calculate the values of $P_1$ and $P_2$ in each type of $S_1,...,S_5.$  First, we claim that
\begin{equation}
    P_1=a_{1s}\alpha_{n,m}(1)+a_{2s}\alpha_{n,m}(2)+\cdots +a_{ks}\alpha_{n,m}(k)
\label{P1}
\end{equation}
holds in each type of $S_1,...,S_5.$ Indeed, in each type of $S_1,..., S_5,$ the cardinality of the set that contains all labelings of $\mathcal{T}$ on $\Lambda_m^n(p)\cup \{p_{m+1}\}$ is determined by the value of $\alpha_{n, m}(j)$ for which symbol $j\in\{1,...,k\}$ can attach to the symbol $s.$  Since $A$ is the adjacency matrix, if the symbol $j$ can attach to the symbol $s,$ then $a_{j s}=1$; otherwise $a_{j s}=0$ for all $j=1,...,k.$ This implies the equation (\ref{P1}) holds. Second, we claim that in the types $S_1$ and $S_3,$
\begin{equation}
    P_2=\sum_{\ell=1}^k a_{s\ell}a_{\ell 1}\beta_{n-2}(1)+\sum_{\ell=1}^k a_{s\ell}a_{\ell 2}\beta_{n-2}(2)+\cdots +\sum_{\ell=1}^k a_{s\ell}a_{\ell k}\beta_{n-2}(k)
\label{P2-1}
\end{equation}
in the types $S_2$ and $S_5,$
\begin{equation}
    P_2=a_{s1}\beta_{n-1}(1)+a_{s2}\beta_{n-1}(2)+\cdots +a_{sk}\beta_{n-1}(k),
\label{P2-2}
\end{equation}
and in the type $S_4$, we have $P_2=1$. Indeed, in the types $S_1$ and $S_3,$ the cardinality of the set that contains all labelings of $\mathcal{T}$ on $\Lambda^n(p_{i+1})$ is determined by the value of $\beta_{n-2}(j)$ for which symbol $j\in\{1,...,k\}$ can be attached by $s$ with length $3.$ Consider $A^2,$ where the cardinality of the set that contains all labelings such that $s$ attaches to $j$ with length $3$ is $\sum_{\ell=1}^k a_{s\ell}a_{\ell j}.$ This implies the equation (\ref{P2-1}) holds. In the types $S_2$ and $S_5,$ the cardinality of the set that contains all labelings of $\mathcal{T}$ on $\Lambda^n(p_{i+1})$ is determined by the value of $\beta_{n-1}(j)$ for which symbol $j\in\{1,...,k\}$ can be attached by $s,$ i.e., $a_{s j}=1.$ This implies (\ref{P2-2}) holds. In the type $S_4,$ the value of $P_2=1$ is clear.

Finally, combining the equality (\ref{P1}) with (\ref{P2-1}), we obtain $R=A^T\circ B_1,$ where $B_1=(b_{st}^1)_{1\leq s,t\leq k}$ with $b_{st}^1=\sum_{j=1}^k(\sum_{\ell=1}^k a_{s\ell}a_{\ell j})\beta_{n-2}(j)$ in the types $S_1$ and $S_3.$ Combining the equality (\ref{P1}) with (\ref{P2-2}), we obtain $R=A^T\circ B_2,$ where $B_2=(b_{st}^2)_{1\leq s,t\leq k}$ with $b_{st}^2=\sum_{\ell=1}^k a_{s\ell}\beta_{n-1}(\ell)$ in the types $S_2$ and $S_5.$ Combining the equality (\ref{P1}) with the fact that $P_2=1$, we obtain $R=A^T$ in the type $S_4.$ The proof is complete. 
\end{proof}

\begin{proof}[Proof of Theorem \ref{Thm: 1}] Due to the constraint of the golden-mean tree $\Sigma_G^*$, without loss of generality, we may assume the path $p\in\partial \Sigma_G^*$ is of the form as follows.  
$$p=\{\epsilon , f_2,f_2f_1,... ,f_2f_1^{n_1}, f_2f_1^{n_1}f_2, f_2f_1^{n_1}f_2f_1,...,f_2f_1^{n_1}f_2f_1^{n_2},...\}$$
where $n_i\in\mathbb{N}$ for all $i\in\mathbb{N}.$ Applying Lemma \ref{lemma 1}, there is a sequence of matrices $\{R_n\}_{n=1}^\infty$ with
\begin{align*}
    R_n=\left\{\begin{array}{ll}
        A_3& \mbox{, if }n=\sum_{i=1}^m (n_i+1)+1 \mbox{ for some }m\geq 0,     \\
        A_2&  \mbox{, if }n=\sum_{i=1}^m (n_i+1) \mbox{ for some }m\geq 1,   \\
        A_1&   \mbox{, otherwise.}  \\
    \end{array}\right.
\end{align*}
That is, $\{R_1,R_2,...\}=\{A_3,\overbrace{A_1,..., A_1}^{n_1-1},A_2,A_3,\overbrace{A_1,..., A_1}^{n_2-1},A_2,...\}$.

For $m_t=\sum_{i=1}^t (n_i-1)+2t+1,$ we have
$$\alpha_{n,m_t}=\sum_{i,j=1}^k(A_3A_1^{n_1-1}A_2A_3A_1^{n_2-1}A_2\cdots A_3A_1^{n_t-1}A_2)_{ij}.$$
Since $A_3$ and $A_2$ are primitive, we have $A_3A_2$ is primitive. Hence, there exist constants $c_0,d_0>0$ such that 
\[c_0\rho_{A_3A_2}^n\leq \sum_{i,j=1}^k(A_3A_2)^n_{ij}\leq d_0\rho_{A_3A_2}^n,\]
for all $n\in\mathbb{N}$. 

Let $\xi_n^{T_i}$ be the number of nodes of the type $T_i$ for $i=1,2,3$ (cf. Figure 1). First, we claim that
\begin{equation}\label{ev}
    \lim\limits_{n\rightarrow \infty}\frac{\log\rho_{A_1}}{\xi_n^{T_1}}=\lim\limits_{n\rightarrow \infty}\frac{\log\beta_{n-2}}{\xi_n^{T_1}}.
\end{equation}
Indeed, since $A_1=A^T\circ B_1,$ where $B_1=(b_{st}^1)_{1\leq s,t\leq k}$ with $b_{st}^1=\sum_{j=1}^k(\sum_{\ell=1}^k a_{s\ell}a_{\ell j})\beta_{n-2}(j),$ we have that 
\begin{equation}\label{CD}
    \beta_{n-2}L\leq A_1\leq \beta_{n-2}U,
\end{equation}
where $L=A^T\circ L'$ and $U=A^T\circ U',$ where $L'=(r)_{1\leq s,t\leq k}$ with $0<r<\min\left\{\frac{\beta_{n-2}(1)}{\beta_{n-2}},...,\frac{\beta_{n-2}(k)}{\beta_{n-2}}\right\}$ and $U'=(k)_{1\leq s,t\leq k}.$ Note that the constant $r$ exists since the adjacency matrix $A$ is primitive. Hence, $\beta_{n-2}\rho_{L}\leq \rho_{A_1}\leq \beta_{n-2}\rho_U.$ This implies the equality (\ref{ev}) holds.

Second, we claim that the upper bound of $\frac{\log \alpha_{n,m_t}}{\lambda^n(p_0)+\cdots +\lambda^n(p_{m_t})}$ is of the form
\begin{equation}
    \frac{\log \alpha_{n,m_t}}{\lambda^n(p_0)+\cdots +\lambda^n(p_{m_t})}\leq \frac{\left(\sum_{j=1}^tn_j-t\right)\log(k\rho_F\beta_{n-2})+t\log \rho_{A_3A_2}+\log d_0}{\left(\sum_{j=1}^tn_j-t\right) \xi_n^{T_1}+t(\xi_n^{T_2}+\xi_n^{T_3})+\xi_n^{T_2}},
\label{upper}
\end{equation}
Indeed, applying the inequality (\ref{CD}), we obtain $A_1\leq k\beta_{n-2}F,$ where $F$ is the full matrix. Then $F$ is a diagonal matrix. Applying the Jordan decomposition of $F,$ we obtain that for each $n\in\mathbb{N}$, $\frac{F^n}{\rho_F^n}\leq I$, where $I$ is the identity $k\times k$ matrix. Thus,
\begin{align*}
\frac{\log \alpha_{n,m_t}}{\lambda^n(p_0)+\cdots +\lambda^n(p_{m_t})}&\leq \frac{\log(k\beta_{n-2})^{n_1+n_2+\cdots +n_t-t}\sum_{i,j=1}^k(A_3F^{n_1-1}A_2\cdots A_3F^{n_t-1}A_2)_{ij}}{\lambda^n(p_0)+\cdots +\lambda^n(p_{m_t})}\\
&\leq \frac{\log(k\rho_F\beta_{n-2})^{n_1+n_2+\cdots +n_t-t}\sum_{i,j=1}^k(A_3A_2)^t_{ij}}{\lambda^n(p_0)+\cdots +\lambda^n(p_{m_t})}\\
&\leq \frac{\log(k\rho_F\beta_{n-2})^{n_1+n_2+\cdots +n_t-t} d_0\rho_{A_3A_2}^t}{\lambda^n(p_0)+\cdots +\lambda^n(p_{m_t})}\\
&=\frac{[n_1+n_2+\cdots +n_t-t]\log (k\rho_F\beta_{n-2})+t\log \rho_{A_3A_2}+\log d_0}{(n_1+n_2+\cdots +n_t-t) \xi_n^{T_1}+t(\xi_n^{T_2}+\xi_n^{T_3})+\xi_n^{T_2}}.
\end{align*}
 
 Third, we claim that the lower bound of $\frac{\log \alpha_{n,m_t}}{\lambda^n(p_0)+\cdots +\lambda^n(p_{m_t})}$ is of the form
\begin{equation}
    \frac{\log \alpha_{n,m_t}}{\lambda^n(p_0)+\cdots +\lambda^n(p_{m_t})}\geq \frac{\left(\sum_{j=1}^tn_j-t\right)\log \rho_{A_1}+t(\log c_n\rho_{A_3A_2})+\log c_0}{\left(\sum_{j=1}^tn_j-t\right)\xi_n^{T_1}+t(\xi_n^{T_2}+\xi_n^{T_3})+\xi_n^{T_2}},
\label{lower}
\end{equation}
where $v_{A_1}~(w_{A_1})$ is the right (left) eigenvector corresponding to $\rho_{A_1}$ and $c_n=\min\{(v_{A_1})_i(w_{A_1}^T)_j\}.$ Indeed, by Theorem 4.5.12 \cite{LM-1995}, we have that for each $n\in\mathbb{N},$
\begin{equation}\label{vw}
v_{A_1}w_{A_1}^T\leq \frac{(A_1)^n}{\rho_{A_1}^n}.
\end{equation}
Applying inequality (\ref{vw}), we obtain
\begin{align*}
\frac{\log \alpha_{n,m_t}}{\lambda^n(p_0)+\cdots +\lambda^n(p_{m_t})}&\geq \frac{\log \sum_{i,j=1}^k(A_3(v_{A_1}w_{A_1}^T)^{n_1-1}A_2\cdots A_3(v_{A_1}w_{A_1}^T)^{n_t-1}A_2)_{ij}}{\lambda^n(p_0)+\cdots +\lambda^n(p_{m_t})}\\
&\geq \frac{\log c_n^k\rho_{A_1}^{n_1+\cdots +n_k-k}\sum_{i,j=1}^k(A_3F^{n_1-1}A_2\cdots A_3F^{n_k-1}A_2)_{ij}}{\lambda^n(p_0)+\cdots +\lambda^n(p_{m_t})}\\
&\geq  \frac{\log c_n^t\rho_{A_1}^{n_1+\cdots +n_t-t}\sum_{i,j=1}^{k}(A_3A_2)^t_{ij}}{\lambda^n(p_0)+\cdots +\lambda^n(p_{m_t})}\\
&\geq  \frac{\log c_n^t\rho_{A_1}^{n_1+\cdots +n_t-t}c_0\rho_{A_3A_2}^t}{\lambda^n(p_0)+\cdots +\lambda^n(p_{m_t})}\\
&=  \frac{(n_1+\cdots +n_t-t)\log \rho_{A_1}+t\log c_n\rho_{A_3A_2}+\log c_0}{(n_1+n_2+\cdots +n_t-t)\xi_n^{T_1}+t(\xi_n^{T_2}+\xi_n^{T_3})+\xi_n^{T_2}}.
\end{align*}

 Finally, combining the inequality (\ref{upper}) with (\ref{lower}), we obtain 
\begin{equation}\label{UL}
  \frac{\log\rho_{A_1}+\log c_n\rho_{A_3A_2}}{\xi_n^{T_1}+\xi_n^{T_2}+\xi_n^{T_3}}\leq \limsup_{m\rightarrow\infty} \frac{\log \alpha_{n,m_t}}{\lambda^n(p_0)+\cdots +\lambda^n(p_{m_t})}\leq \frac{\log(k\rho_F\beta_{n-2})+\rho_{A_3A_2}}{\xi_n^{T_1}+\xi_n^{T_2}+\xi_n^{T_3}}
\end{equation}
Combining (\ref{ev}) with (\ref{UL}) and taking $n$ to infinity, we obtain the results. This proof is complete.
\end{proof}

\begin{remark}\label{rmk4}
In Petersen and Salama's results \cite{petersen2020entropy}, they consider the golden-mean shift of finite type, that is, the adjacency matrix $A=\begin{bmatrix}
1&1\\
1&0
\end{bmatrix}
,$ on the $d$-tree $\mathcal{T}$ and show that the $n$-strip entropy $h^n(\mathcal{T},p)$ along $p=(\epsilon,f_1,f_1^2,...)$ converges to the topological entropy $h(\mathcal{T}).$ It is a special case of Theorem \ref{Thm: 1} in this paper. First, we claim that the adjacency matrix in Lemma \ref{lemma 1} is
\[
R=\begin{bmatrix}
    \beta_{n-1}^{d-1}&\beta_{n-1}^{d-1}\\
    \beta_{n-1}(0)^{d-1}&0
\end{bmatrix}.
\]
Indeed, from the definition of adjacency matrix $A,$ we obtain
\begin{align*}
    \alpha_{n,m+1}(0)&=(\alpha_{n,m}(0)+\alpha_{n,m}(1))\beta_{n-1}^{d-1};\\
    \alpha_{n,m+1}(1)&=\alpha_{n,m}(0)\beta_{n-1}(0)^{d-1}.
\end{align*}
This implies 
\[
\begin{bmatrix}
    \alpha_{n,m+1}(0)\\
    \alpha_{n,m+1}(1)
\end{bmatrix}=\left(R\equiv\begin{bmatrix}
    \beta_{n-1}^{d-1}&\beta_{n-1}^{d-1}\\
    \beta_{n-1}(0)^{d-1}&0
\end{bmatrix}\right)\begin{bmatrix}
    \alpha_{n,m}(0)\\
    \alpha_{n,m}(1)
\end{bmatrix}.
\]
From the matrix $R$, we obtain the following inequality
\begin{equation}\label{rmk1}
\beta_{n-1}^{d-1}\left(L\equiv\begin{bmatrix}
r&r\\
r&0
\end{bmatrix}\right)\leq R\leq \beta_{n-1}^{d-1} \left(U\equiv \begin{bmatrix}
2&2\\
2&0
\end{bmatrix}\right),
\end{equation}
where $0
<r<\min\left\{\frac{\beta_{n-1(0)}}{\beta_{n-1}},\frac{\beta_{n-1(1)}}{\beta_{n-1}}\right\}.$ Applying the inequality (\ref{rmk1}), we obtain 
\begin{equation}\label{rmk2}
\frac{\log \beta_{n-1}^{d-1}\rho_L}{(k-1)\lvert\Delta_{n-1}\rvert}\leq
\frac{\log\rho_R}{(d-1)\lvert\Delta_{n-1}\rvert}\leq 
\frac{\log \beta_{n-1}^{d-1}\rho_U}{(d-1)\lvert\Delta_{n-1}\rvert}, 
\end{equation}
where $\rho_L,\rho_R$ and $\rho_U$ are the maximal eigenvalues with respect to the matrices $L,R$ and $U.$ Finally, applying the inequality (\ref{rmk2}), we obtain 
\[
\lim\limits_{n\rightarrow \infty}h^n(\mathcal{T},p)=\lim\limits_{n\rightarrow \infty}\frac{\log \rho_R}{(d-1)\lvert\Delta_{n-1}\rvert}=\lim\limits_{n\rightarrow \infty}\frac{\log \beta_{n-1}}{\lvert\Delta_{n-1}\rvert}=h(\mathcal{T}).
\]
\end{remark}

\section{Proof of Theorem \ref{Thm: 1-2}} \label{sub 3}
In this section, we prove the limit in (\ref{1}) holds true for any eventually periodic path $p\in\partial\Sigma_M^*$, where $\Sigma_M^*$ is a complete recursive tree that $M$ has exactly one full row. The following Theorem in \cite{ban2022Commutativity} characterized the complete recursive trees.
\begin{theorem}[Theorem 3.6, \cite{ban2022Commutativity}]\label{Thm: complete}
 $\mathcal{T}$ is a complete recursive tree if and only if the adjacency matrix $M$ has the following properties: 1. there exists a nonempty subset $\mathfrak{J}$ of $\{1,...,d\}$ such that the $i$th row of $M$ is a full row for all $i\in \mathfrak{J}$; and 2. After reordering the symbols $\{1,...,d\},~M_{ij}=0$ for all $i\in\{1,...,d\}\setminus \mathfrak{J}$ and $1\leq j\leq i\leq d.$ 
\end{theorem}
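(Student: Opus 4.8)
The plan is to run the two-stage argument of Theorem \ref{Thm: 1} — a recursive matrix relation (as in Lemma \ref{lemma 1}) followed by a Perron--Frobenius squeeze — over the more general combinatorics of a complete recursive tree, using the eventual periodicity of $p$ to reduce the matrix products to a single power. First I would put $M$ into normal form: since $\Sigma_M^*$ is complete recursive with exactly one full row, Theorem \ref{Thm: complete} lets me reorder the generators so that $\mathfrak{J}=\{r\}$ is a single index (the unique full row) and $M_{ij}=0$ for every non-full row $i\neq r$ and all $j\le i$. This normal form pins down the local branching at every node $\omega$: its out-neighbours in $\Sigma_M^*$ are the generators $f_j$ with $M(\mathrm{last}(\omega),j)=1$, and the full row $r$ is the unique direction from which a \emph{complete} subtree (a copy of $\Sigma_M^*$, by the defining property $F_M(u)=\Sigma_M^*$) can hang.

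Next I would classify the finitely many local types of $\Lambda^n(p_j)$ and of consecutive pairs $\Lambda^n(p_j)\cup\Lambda^n(p_{j+1})$, exactly as the types $T_1,T_2,T_3$ and $S_1,\ldots,S_5$ were enumerated in the golden-mean case. Each type is determined by the generator used to enter $p_j$, the generator used to leave it, and the row of $M$ indexing $\mathrm{last}(p_j)$; finiteness of the generator alphabet makes this a finite list. For each type I would reproduce the splitting $\alpha_{n,m+1}(s)=P_1P_2$ of Lemma \ref{lemma 1}: $P_1$ counts extensions of the strip to $p_{m+1}$, contributing the factor $A^T$ through admissibility of $A$, while $P_2$ counts the fillings of the off-path subtrees hanging at $p_{m+1}$. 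Because those subtrees are shaped by $M$ and become complete in the full-row direction, each $P_2$ is a $\beta_{n-1}(\cdot)$- or $\beta_{n-2}(\cdot)$-weighted entry of $A$ or $A^2$; this produces, for each type, a primitive $k\times k$ matrix $R$ (primitive since $A$ is primitive and the $\beta$-weights are positive) with $(\alpha_{n,m+1}(i))_i^T=R\,(\alpha_{n,m}(i))_i^T$, generalizing $\{A_1,A_2,A_3\}$.

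I would then exploit eventual periodicity. For eventually periodic $p$, the sequence of local types — hence the matrix sequence $\{R_m\}$ — is eventually periodic with some period $\pi$; let $P=R_{m_0+\pi}\cdots R_{m_0+1}$. As each $R_m$ is primitive, $P$ is primitive, so Perron--Frobenius yields constants $c_0,d_0>0$ with $c_0\rho_P^{\,t}\le\sum_{i,j}(P^t)_{ij}\le d_0\rho_P^{\,t}$, squeezing $\alpha_{n,m_0+t\pi}$ between constant multiples of $\rho_P^{\,t}$. Bounding $\rho_P$ above and below by the weights $\beta_{n-2},\beta_{n-1}$ and the spectral radius of the boundary factor (the analogue of $A_3A_2$), exactly as in the derivations of (\ref{upper}) and (\ref{lower}), I obtain upper and lower bounds for $\frac{\log\alpha_{n,m}}{|\Lambda_m^n(p)|}$ whose leading term is $\frac{\log\beta_{n-2}}{\xi_n}$, with $\xi_n$ the number of strip-nodes added per period and lower-order corrections that vanish after division. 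Taking $n\to\infty$, the analogue of (\ref{ev}) forces both bounds to $h(\mathcal{T}_A)$, collapsing the squeeze (\ref{UL}) and giving (\ref{1}).

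The main obstacle is the type classification and the clean factorization of $P_2$ in the middle step: for general $M$ I must check that the normal form keeps the number of local types finite and, crucially, that the unique full row makes exactly one branching direction per node carry the dominant $\beta_{n-1}$/$\beta_{n-2}$ weight, so that its exponent equals the number of full-branching path-nodes per period. This is precisely where ``exactly one full row'' is used: with several full rows the dominant weight's exponent would depend on the ray, and the squeeze would no longer close uniformly in $n$. Verifying that the off-path subtree counts collapse to $\beta$-weighted powers of $A$, rather than to more complicated transfer quantities, is the technical heart of the argument.
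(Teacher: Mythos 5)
You have proved the wrong statement. Theorem \ref{Thm: complete} is a purely combinatorial characterization of complete recursive trees in terms of the $0$-$1$ matrix $M$ alone: it involves no tree-shift $\mathcal{T}_A$, no entropy, no ray $p$, and no strip $\Lambda_m^n(p)$. Your entire argument --- the type classification, the recursion matrices generalizing $\{A_1,A_2,A_3\}$, the Perron--Frobenius squeeze over an eventually periodic matrix product --- is a sketch of Theorem \ref{Thm: 1-2}, not of Theorem \ref{Thm: complete}. Worse, your opening step \emph{invokes} Theorem \ref{Thm: complete} (``Theorem \ref{Thm: complete} lets me reorder the generators\dots''), so read as a proof of that theorem your argument is circular. (For orientation: the paper itself offers no proof of this statement either; it is imported verbatim as Theorem 3.6 of \cite{ban2022Commutativity}, so there is no in-paper proof to match, but the proposal still does not establish the claim.)

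What a proof actually requires is an equivalence with two directions, and the key observations are elementary graph theory, not spectral theory. First, $F_M(u)$ depends only on the terminal symbol of $u$, and $F_M(u)=\Sigma_M^*$ holds exactly when the row of $M$ indexed by that terminal symbol is full; hence a CPS $S$ witnessing complete recursiveness must consist of words ending in symbols of $\mathfrak{J}$. Such a finite $S$ covering all long words exists if and only if every sufficiently long word of $\Sigma_M^*$ has a prefix ending in a full-row symbol, which holds if and only if the directed graph of transitions leaving the non-full-row symbols contains no cycle. For sufficiency, condition 2 says that after reordering every transition out of a symbol $i\notin\mathfrak{J}$ goes to a strictly larger index, so any admissible word avoiding $\mathfrak{J}$ has length at most $d$, and one takes $S$ to be the (finite) set of minimal words ending in a symbol of $\mathfrak{J}$. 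For necessity, a cycle among non-full-row symbols produces an infinite ray no prefix of which ends in a full-row symbol, contradicting the CPS property; acyclicity then yields the reordering of condition 2 by a topological sort. None of this appears in your proposal, and none of your entropy machinery can substitute for it, since the statement quantifies over no dynamical data at all.
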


Having fixed the path $p=(p_0,p_1,p_2,...)\in\partial\Sigma^*_M,$ we say $p$ is \emph{periodic} with period $\ell\in\mathbb{N}$ if $p_{q\ell+i}=\overbrace{p_\ell\cdots p_\ell}^{q} p_i$ for all $i=0,...,\ell-1$ and $q\geq 0.$ If there exists $c\in\mathbb{N}$ such that $p_{q\ell+i+c}=p_c\overbrace{p_\ell\cdots p_\ell}^{q}p_i$ for all $i=0,...,\ell-1$ and $q\geq 0,$ then we say $p$ is \emph{eventually periodic} with period $\ell\in\mathbb{N}.$ 

Before we prove Theorem \ref{Thm: 1-2}, we need the following lemma, which gives the eventual periodic path $p\in\partial\Sigma^*_M$ impact on the strip entropy. 

\begin{lemma}\label{lemma 2}
Let $p=\{p_0,p_1,p_2,...\}\in\partial\Sigma_M^*$ be eventually periodic with period $\ell\in\mathbb{N},$ that is, there exists $c\in\mathbb{N}$ such that $p_{q\ell+i+c}=p_c\overbrace{p_\ell\cdots p_\ell}^{q}p_i$ for all $i=0,...,\ell-1$ and $q\geq 0.$ Then 
    \[
    \Lambda^n(p_{q\ell+i+c+1})=\Lambda^n(p_{c+i+1})
    \]
    for all $i=0,...,\ell -1$ and $q\geq 0.$ 
\end{lemma}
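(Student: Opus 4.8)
The plan is to reduce the combinatorial type of each strip $\Lambda^n(p_j)$ to the pair of generators at its root, and then to check that this pair is unchanged when the root index is shifted by a multiple of the period $\ell$. Concretely, I would encode the path by its generator sequence: write $p_j=p_{j-1}f_{\sigma_j}$, so that $(\sigma_j)_{j\ge 1}$ determines $p$ and $p_{j+1}=p_jf_{\sigma_{j+1}}$. By definition an element of $\Lambda^n(p_j)$ is a node $p_j\omega$ with $1\le\lvert\omega\rvert\le n-1$, with $\omega_1\ne f_{\sigma_{j+1}}$, and with $p_j\omega\in\Sigma_M^*$. Because $\Sigma_M^*$ is a nearest-neighbor Markov--Cayley tree, $p_j\omega\in\Sigma_M^*$ holds precisely when $M(\sigma_j,\omega_1)=1$ and $\omega\in\Sigma_M^*$; hence the admissible offsets $\omega$ depend only on the triple $(\sigma_j,\sigma_{j+1},n)$ and not on the earlier letters of $p_j$. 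Writing $\Lambda^n(p_j)=p_j\cdot\Omega_n(\sigma_j,\sigma_{j+1})$ for this offset set, two strips $\Lambda^n(p_{j_1})$ and $\Lambda^n(p_{j_2})$ coincide (after the canonical prefix translation identifying their roots $p_{j_1}$ and $p_{j_2}$) as soon as $(\sigma_{j_1},\sigma_{j_1+1})=(\sigma_{j_2},\sigma_{j_2+1})$.

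Next I would extract $\ell$-periodicity of $(\sigma_j)$ from the hypothesis $p_{q\ell+i+c}=p_c(p_\ell)^q p_i$. Comparing this identity for $i$ and $i+1$ shows that the generator appended to $p_{q\ell+i+c}$ equals the one at the matching position of a single period block $p_\ell$; running through $i=0,\ldots,\ell-1$ (the case $i=\ell-1$ starting the $(q{+}1)$-st copy of $p_\ell$, so that the appended generator is the last letter $\sigma_\ell$ of $p_\ell$) yields $\sigma_{m+\ell}=\sigma_m$ for every $m>c$. Thus, beyond the transient of length $c$, the generator sequence is genuinely $\ell$-periodic.

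Finally, I would set $j_2=c+i+1$ and $j_1=q\ell+i+c+1$, so that $j_1=j_2+q\ell$. Since $i\ge 0$ we have $j_2\ge c+1>c$ and $j_2+1>c$, so applying the periodicity $\sigma_{m+\ell}=\sigma_m$ a total of $q$ times gives $\sigma_{j_1}=\sigma_{j_2}$ and $\sigma_{j_1+1}=\sigma_{j_2+1}$. By the reduction of the first step the offset sets agree, $\Omega_n(\sigma_{j_1},\sigma_{j_1+1})=\Omega_n(\sigma_{j_2},\sigma_{j_2+1})$, and therefore $\Lambda^n(p_{q\ell+i+c+1})=\Lambda^n(p_{c+i+1})$ for all $i=0,\ldots,\ell-1$ and $q\ge 0$.

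I expect the only real content to be the first reduction: recognizing that the type of $\Lambda^n(p_j)$ is translation invariant and is pinned down by the local pair $(\sigma_j,\sigma_{j+1})$, which uses exactly the nearest-neighbor (Markov) structure of $\Sigma_M^*$. After that the statement is index bookkeeping, the only delicate point being the boundary index $i=\ell-1$, where one must notice that the next generator comes from the start of a fresh period block rather than from $\sigma_{i+2}$. It is worth noting that this lemma needs only eventual periodicity together with the Markov structure of the tree; neither the complete-recursive hypothesis nor the one-full-row hypothesis of Theorem \ref{Thm: 1-2} is invoked here.
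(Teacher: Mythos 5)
Your proposal is correct and follows essentially the same route as the paper's own proof: both reduce the type of the strip $\Lambda^n(p_j)$ to the local generator pair $(\sigma_j,\sigma_{j+1})$ via the Markov structure of $\Sigma_M^*$, and then match these pairs across a period using the hypothesis $p_{q\ell+i+c}=p_c\overbrace{p_\ell\cdots p_\ell}^{q}p_i$. Your write-up is in fact more explicit than the paper's (the translation identification of roots, the boundary case $i=\ell-1$, and the observation that neither the complete-recursive nor the one-full-row hypothesis is needed), but the underlying argument is the same.
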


\begin{proof}
    Let $p=\{p_0,p_1,p_2,...\}\in\partial\Sigma_M^*$ and there exists $c\in\mathbb{N}$ such that $p_{q\ell+i+c}=p_c\overbrace{p_\ell\cdots p_\ell}^{q}p_i$ for all $i=0,...,\ell-1$ and $q\geq 0.$ From the same observation in the golden-mean tree, the type of $\Lambda^n(p_j)$ is a function of $(f_s,f_{s'})$ that satisfies $(p_j,p_{j+1})=(p_{j-1}f_s,p_{j-1}f_sf_s')$ for all $f_sf_{s'}\in\Sigma_M^*$ and $j\geq 0.$ Since $p_{q\ell+i+c}=p_c\overbrace{p_\ell\cdots p_\ell}^{q}p_i$ for $q\geq 0,$ we obtain $p_{q\ell+i+c+1}=p_c\overbrace{p_\ell\cdots p_\ell}^{q}p_{i+1}$ and $p_{q\ell+i+c+2}=p_c\overbrace{p_\ell\cdots p_\ell}^{q}p_{i+2}.$ Then $(p_{q\ell+i+c+1},p_{q\ell+i+c+2})=(p_{q\ell+i+c}f_s,p_{q\ell+i+c}f_sf_{s'})$ if and only if $(p_{c+i+1},p_{c+i+2})=(p_{c+i}f_{s},p_{c+i}f_sf_{s'})$ for all $f_sf_{s'}\in\Sigma_M^*.$ This proof is complete. 
\end{proof}

\begin{proof}[Proof of Theorem \ref{Thm: 1-2}] Since $\Sigma_M^*$ is a complete recursive tree where $M$ has exactly one full row, by Theorem \ref{Thm: complete}, we may assume $M=(m_{st})_{1\leq s,t\leq d}\in \{0,1\}^{d\times d}$ with $m_{1t}=m_{n(n+1)}=m_{d1}=1$ for all $t=1,...,d$ and $n=2,...,d-1$; otherwise it is $0.$ Let $p=(p_0,p_1,p_2,...)\in\partial\Sigma_M^*$ be an eventually periodic path with period $\ell\in\mathbb{N}.$ Then there exists $c\in\mathbb{N}$ such that $p_{q\ell+i+c}=p_c\overbrace{p_\ell\cdots p_\ell}^{q}p_i$ for all $i=0,...,\ell-1$ and $q\geq 0.$ Applying Lemma \ref{lemma 2}, we obtain 
\begin{equation}\label{ep1}
\Lambda^n(p_{q\ell+i+c+1})=\Lambda^n(p_{c+i+1})
\end{equation}
for all $i=0,...,\ell-1$ and $q\geq 0.$ 

First, we claim that the type of $\Lambda^n(p_j)$ is finite, say $\{T_1^M,...,T_\xi^M\}.$ Indeed, since the type of $\Lambda^n(p_j)$ is a function of $(p_j,p_{j+1})$ and $d$ is finite, the cardinality of the set contains all pairs $(f_s,f_{s'})$, which satisfies that $(p_j,p_{j+1})=(p_{j-1}f_s,p_{j-1}f_sf_{s'})$ is finite. Thus, the type of $\Lambda^n(p_j)$ is finite. In fact, $\Lambda^n(p_j)$ is the union of sets in $\{\mathcal{B}_1^j,...,\mathcal{B}_d^j\},$ where \[\mathcal{B}_i^j=\{p_jf_iw\in\Sigma_M^*:\lvert p_jf_iw\rvert\leq j+n-1~\text{and}~p_jf_i\neq p_{j+1}\}.\] 

Second, we claim that there exists a $k\times k$ matrix $R$ such that 
\begin{equation}\label{e1}
(\alpha_{n,m+1}(1),...,\alpha_{n,m+1}(k))^T=R(\alpha_{n,m}(1),...,\alpha_{n,m}(k))^T.
\end{equation}
Indeed, without loss of generality, we consider the case 
\begin{equation}\label{111}
(p_m,p_{m+1},p_{m+2})=(p_{m-1}f_1,p_{m-1}f_1f_1,p_{m-1}f_1f_1f_1),
\end{equation}
otherwise, the other cases $(p_m,p_{m+1},p_{m+2})=(p_{m-1}f_s,p_{m-1}f_sf_{s'},p_{m-1}f_sf_{s'}f_{s''})$ consider the same process in the proof of case (\ref{111}) for all $f_sf_{s'}f_{s''}\in \Sigma_M^*.$
For $s\in \{1,...,k\},$ applying the same observation as in Lemma \ref{lemma 1}, we obtain
\begin{equation}\label{e2}
\alpha_{n,m+1}(s)=P_1P_2, 
\end{equation}
where $P_1=\lvert\mathcal{P}(\Lambda_m^n(p)\cup \{p_{m+1},\{t=(t_w)\in\mathcal{T}:t_{p_{m+1}}=s\}\})\rvert$ is the cardinality of the set that contains all labelings of $\mathcal{T}$ on $\Lambda_m^n (p)\cup \{p_{m+1}\}$ with $t_{p_{m+1}}=s,$ and $ P_2=\lvert\mathcal{P}(\Lambda^n(p_{m+1}),\{t=(t_w)\in \mathcal{T}:t_{p_{m+1}=s}\}) \rvert$ is the cardinality of the set that contains all labelings of $\mathcal{T}$ on $\Lambda^n(p_{m+1})$ with $t_{p_{m+1}}=s.$ From the choice of the path (\ref{111}), we obtain $\Lambda^n(p_{m+1})=\cup_{i=2}^{d}\mathcal{B}_i^{m+1}$ and each $\mathcal{B}_i^{m+1}$ is non-empty. Let $A^n=(a_{st}^{(n)})_{1\leq s,t\leq k},$ then we obtain the values of 
\begin{equation}\label{222}
P_1=a_{1s}\alpha_{n,m}(1)+a_{2s}\alpha_{n,m}(2)+\cdots +a_{ks}\alpha_{n,m}(k)
\end{equation}
and 
\begin{equation}\label{333}
P_2= \left(\sum_{\ell=1}^k a_{s\ell}^{(d)}\beta_{n-d}(\ell)\right)\left(\sum_{\ell=1}^k a_{s\ell}^{(d-1)}\beta_{n-(d-1)}(\ell)\right)\cdots \left(\sum_{\ell=1}^k a_{s\ell}^{(2)}\beta_{n-2}(\ell)\right).
\end{equation}
Combining the equations (\ref{e2}), (\ref{222}) and (\ref{333}), we obtain that the matrix $R$ satisfies the equality (\ref{e1}). In fact, $R=A^T\circ B,$ where $B=(b_{st})_{1\leq s,t\leq k}$ with $b_{st}=\prod_{j=2}^d\left(\sum_{\ell=1}^k a_{s\ell}^{(j)}\beta_{n-j}(\ell)\right).$ Moreover, the type of matrix $R$ is finite since the cardinality of $\{T_1^M,...,T_\xi^M\}$ is finite, say $R\in\{R_1,...,R_{\zeta}\}$ and each entry in $R_j$ is a product of the linear combination of $\beta_{n-i}(1),...,\beta_{n-i}(k)$ for all $i=1,...,d.$ 

Third, we claim that for each $R_j$ there exist $k\times k$ matrices $L_j$ and $U_j$ such that 
\[
\prod_{i=1}^d\beta_{n-i}^{t_{i,j}}L_j\leq R_j\leq \prod_{i=1}^d\beta_{n-i}^{t_{i,j}} U_j
\]
for some $1\leq t_{i,j}\leq d-1.$ Indeed, without loss of generality, we consider $R=A^T\circ B$ in the second claim. We choose 
$L=A^T\circ L'$ and $U=A^T\circ U',$ where $U'=(k^{d-1})_{1\leq s,t\leq k}$ and $L'=(r^{d-1})_{1\leq s,t\leq k}$ with $0<r<\min_{1\leq i\leq d}\left\{\frac{\beta_{n-i}(1)}{\beta_{n-i}},\frac{\beta_{n-i}(2)}{\beta_{n-i}},...,\frac{\beta_{n-i}(d)}{\beta_{n-i}}\right\}.$ Note that the constant $r$ exists since the adjacency matrix $A$ is primitive. Thus,
\[
\prod_{i=2}^d \beta_{n-i}L\leq R\leq \prod_{i=2}^d \beta_{n-i} U. 
\]

From (\ref{ep1}), there is a sequence of matrices $\{A_n\}_{n=1}^\infty$ with $A_n\in \{R_1,...,R_{\zeta}\}$ and $A_{q\ell+i+c+1}=A_{i+c+1}$ for all $i=0,...,\ell-1,~q\geq 0$ and $n\in\mathbb{N}.$ For convenience, we rewrite this sequence as $D_i=A_{q\ell+i+c}$ for all $i=1,...,\ell$ and $q\geq 0.$ 

Fourth, we claim that
\begin{equation}\label{c4}
h^n(\mathcal{T},p)=\frac{\log \rho_D}{\lambda^n(p_{c+1})+\cdots +\lambda^n(p_{c+\ell})},
\end{equation}
where $D=D_1\cdots D_\ell.$ Indeed, for fixed $n\in\mathbb{N}$ and $i\in\{1,...,\ell\},$
\begin{equation}\label{4}
\alpha_{n,q\ell+i+c}=\sum_{s,t=1}^k(A_1\cdots A_cD^qD_1\cdots D_i)_{st}.
\end{equation}
Recall that the adjacency matrix $A$ is primitive and each $R_j$ is of the form $A^T \circ B$ for some matrix $B$ whose entries in $B$ are a product of the linear combination of $\beta_{n-i}(1),...,\beta_{n-i}(k)$ for $i=1,...,d$ from the second claim. Therefore, $D$ is primitive. Then we obtain that there exist constants $c_0,d_0$ such that 
\begin{equation}\label{5}
c_0\rho_D^q\leq \sum_{s,t=1}^k(D^q)_{st}\leq d_0 \rho_D^q.
\end{equation}
From the second claim and the adjacency matrix $A$ is primitive, we obtain $A_1\cdots A_c$ and $D_1\cdots D_i$ being primitive. Then
\begin{equation}\label{6}
\sum_{s,t=1}^k(D^q)_{st}\leq\sum_{s,t=1}^k(A_1\cdots A_cD^qD_1\cdots D_i)_{st}\leq \sum_{s,t=1}^k(F^cD^qF^i)_{st},
\end{equation}
where each entry in $F$ is $\beta_{n-1}^{d-1}.$ This reduces the inequality (\ref{6}) into
\begin{equation}\label{7}
    \sum_{s,t=1}^k(D^q)_{st}\leq\sum_{s,t=1}^k(A_1\cdots A_cD^qD_1\cdots D_i)_{st}\leq \beta_{n-1}^{(d-1)(c+i)}k^{c+i}\sum_{s,t=1}^k(D^q)_{st}.
\end{equation}
Combining the inequality (\ref{5}) and (\ref{7}), we obtain that 
\begin{equation}\label{8}
c_0\rho_D^q\leq\sum_{s,t=1}^k(A_1\cdots A_cD^qD_1\cdots D_i)_{st}\leq \beta_{n-1}^{(d-1)(c+i)}k^{c+i}d_0\rho_D^q.
\end{equation}
Thus, combining (\ref{4}) and (\ref{8}), we obtain 
\begin{align*}
h^n(\mathcal{T},p)&=\lim_{q\rightarrow \infty}\frac{\log \alpha_{n,q\ell+i+c}}{\lambda^n(p_0)+\cdots +\lambda^n(p_{q\ell+i+c})}\\
&=\lim_{q\rightarrow \infty}\frac{\log \alpha_{n,q\ell+i+c}}{\lambda^n(p_0)+\cdots \lambda^n(p_c)+q(\lambda^n(p_{c+1})\cdots +\lambda^n(p_{c+\ell}))+\lambda^n(p_1)+\cdots +\lambda^n(p_i)} \\
&=\frac{\log \rho_D}{\lambda^n (p_{c+1})+\cdots +\lambda^n(p_{c+\ell})}.
\end{align*}

Finally, we prove that 
\[
\lim_{n\rightarrow \infty}\frac{\log \rho_D}{\lambda^n(p_{c+1})+\cdots +\lambda^n(p_{c+\ell})}=h(\mathcal{T},p).
\]
Indeed, without loss of generality, we consider the period $\ell=2$ and the path
\[
(p_c,p_{c+1},p_{c+2})=(p_c,p_cf_1,p_cf_1f_d).
\]
Applying the second claim, we obtain $D=D_1D_2$, $D_1=A^T\circ B$ and $D_2=A^T,$ where 
$B=(b_{st})_{1\leq s,t\leq k}$ with $b_{st}=\prod_{j=1}^{d-1}\left(\sum_{\ell=1}^k a_{s\ell}^{(j)}\beta_{n-j}(\ell)\right).$ Applying the third claim, we obtain that there exist $k\times k$ matrices $L=A^T\circ L'$ and $U=A^T\circ U',$ where $L'=(r^{d-1})_{1\leq s,t\leq k}$ with $0<r<\min_{1\leq i\leq d}\left\{\frac{\beta_{n-i}(1)}{\beta_{n-i}},\frac{\beta_{n-i}(2)}{\beta_{n-i}},...,\frac{\beta_{n-i}(d)}{\beta_{n-i}}\right\}$ and $U'=(k^{d-1})_{1\leq s,t\leq k}$ such that 
\begin{equation}\label{9}
\prod_{j=1}^{d-1} \beta_{n-j}LA^T\leq D=D_1D_2\leq \prod_{j=1}^{d-1} \beta_{n-j} UA^T. 
\end{equation}
Then, since the adjacency matrix $A$ is primitive, the inequality (\ref{9}) leads to
\begin{equation}\label{10}
    \prod_{j=1}^{d-1}\beta_{n-j} \rho_{LA^T}\leq \rho_D\leq \prod_{j=1}^{d-1}\beta_{n-j}\rho_{UA^T}.
\end{equation}
Combining (\ref{c4}) and the inequality (\ref{10}), we conclude 
\begin{align*}
\lim_{n\rightarrow \infty}h^n(\mathcal{T},p)&=\lim_{n\rightarrow \infty}\frac{\log \rho_D}{\lambda^n(p_{c+1})+\cdots +\lambda^n(p_{c+\ell})}\\
&=\lim_{n\rightarrow \infty}\frac{\sum_{j=1}^{d-1}\log \beta_{n-j}}{\lambda^n(p_{c+1})+\lambda^n(p_{c+2})}\\
&=\lim_{n\rightarrow \infty}\frac{\sum_{j=1}^{d-1}\log \beta_{n-j}}{\sum_{j=1}^{d-1}\lvert\Delta_{n-j}\rvert}=h(\mathcal{T},p).
\end{align*}
The proof is complete.
\end{proof}

\bibliographystyle{amsplain}
\bibliography{ban}

\end{document}